\newtheorem{theorem}{Theorem}
\newtheorem{problem}[theorem]{Problem}
\newcommand{\R}{{\mathbb R}}
\newcommand{\E}{{\mathbb E}}  
\newcommand{\tr}{\operatorname{trace}}
\definecolor{grey}{rgb}{0.6,0.6,0.6}
\definecolor{lightgray}{rgb}{0.97,.99,0.99}
\title{On local entropy, stochastic control and deep neural networks}
\author{Michele Pavon
\thanks{M.\ Pavon is with the Dipartimento di Matematica,
University of Padova, Padova, Italy; {email: pavon@math.unipd.it}}}
\begin{document}
\maketitle
\thispagestyle{empty}
\begin{abstract} In this paper, we connect some recent papers on smoothing of energy landscapes and scored-based generative models of machine learning to  classical work in stochastic control. We clarify these connections providing rigorous statements and representations which may serve as guidelines for further learning models.   
\end{abstract}

{\small{\em Index terms:} Stochastic optimal control, machine learning, neural networks}
\section{Introduction} Wendell Fleming and co-workers and other scientists in  a series of papers between forty and thirty years ago \cite{Fle1, Fle2,Fle83,Fle3,P,DPP} showed that $-\log p$, where $p$ satisfies a Fokker-Planck equation, may be viewed as the value function of a stochastic control problem. In particular, in \cite[p.194]{P}, $-\log p(x,t)$ was named {\em local entropy} and various of its properties were established. The corresponding optimal control, see (\ref{OPTCONTR}) below, is then related to the so-called {\em score-function} $\nabla\log p(x,t)$ of generative models of machine learning based on flows \cite{ACTTV,RM,DSB,HCSDA,HJA,WKN,SSKKE,DTHD,WJXWY,VTLL,ZC,CLT}.  Local entropy was recently rediscovered in \cite{CCSY,COOSC} in connection with an attempt to smooth the {\em energy landscape} of deep neural networks. Differently from the work of Fleming and coworkers and of Chaudhari and coworkers, the stochastic control problems of which the local entropy is a value function are, as first observed in \cite{P}, {\em reverse-time} problems. We show in this paper that they are connected to a special case of a large deviation problem \cite{F2} first studied by Erwin Schr\"odinger in 1931-32 \cite{S1,S2} called {\em half-bridge problem}. The latter problem was recently used in \cite{PTT} for estimation of integrals through a variation of importance sampling. {We observe that \cite{P} is not concerned with Schr\"odinger bridges and \cite{PW}  deals with the full bridge problem. Hence, Theorem \ref{THM} connecting a reverse-time stochastic control problem to a half-bridge problem, appears new.  
This result is admittedly straightforward, but we hope  it might nevertheless be useful given a certain amount of confusion present in the machine learning literature on these topics, see Section \ref{discussion}}. 

The paper is outlined as follows. In Section \ref{DNN}, we provide some background on neural networks and on Gaussian smoothing of energy landscapes following \cite{COOSC}. In Section \ref{FED}, we collect a few basic results on finite-energy diffusions due to Edward Nelson and Hans F\"{o}llmer. In Section \ref{HBP}, we formulate two half bridge problems. We then provide a (reverse time) stochastic control formulation of one of them when the initial density is of the Boltzmann-Gibbs type. In Theorem \ref{THM}, we show that the local entropy is essentially the {\em value function} of the latter stochastic control problem. {Section \ref{discussion} contains some remarks on stochastic control in recent machine learning papers.}

\section{Background on deep neural networks}\label{DNN}
In some large dimensional problems, it is prohibitive to calculate the full gradient at each iteration. Consider for instance {\em deep neural networks}. A deep network has a multilayer architecture consisting of a nested composition of a linear transformation and a nonlinear one $\psi$ such as the sigmoid
\[\psi(x)=\frac{1}{1+\exp(-x)}
\]
or the {\em rectified linear unit} ReLU $\psi(x)=\max(x,0)$. In the learning phase of a deep network, one compares the predictions $y(x,\xi^i)$ for the input sample $\xi^i$ with the actual output $y^i$. This is done through a cost function $f_i(x)$, e.g.
\[f_i(x)=\|y^i-y(x;\xi^i)\|^2.
\]
The goal is to learn the {\em weights} $x\in\R^d$ through minimization of the empirical loss function
\[f(x)=\frac{1}{N}\sum_{i=1}^Nf_i(x).
\]
In modern datasets, such as ImageNet for image classification, $N$ can be in the millions. 
Therefore calculation of the full gradient $\frac{1}{N}\sum_{i=1}^N\nabla f_i(x)$  at each iteration to perform gradient descent is unfeasible.  One can then resort to {\em stochastic gradients} by sampling uniformly from the set $\{1,\ldots, N\}$ the index $i_k$ where to compute the gradient at iteration $k$
\begin{equation}\label{SGdescent}x_{k+1}=x_k-\eta_k\nabla f_{i_k}(x_{k-1})
\end{equation}
where $x^0$ is also random. In alternative, one can also average the gradient over a set of randomly chosen samples called  ``mini-batches", write $\nabla f_{{\rm mb}}(x)$. 
Assuming $\eta_k=\eta$ constant, one can rewrite (\ref{SGdescent}) as
\begin{eqnarray}\nonumber x_{k+1}-x_k&=&-\eta\nabla f(x_k)+\sqrt{\eta}V_k,\\ V_k&=&\sqrt{\eta}\left(\nabla f(x_k)-\nabla f_{i_k}(x_k)\right).\nonumber
\end{eqnarray}
Here the ``noise" $V_k$ has zero mean (i.e. $\nabla f_{i_k}(x_k)$ is an unbiased estimator of $\nabla f(x_k)$) and covariance  $\eta\Sigma(x_k)$ with

\[\Sigma(x)=\frac{1}{N}\sum_{i=1}^N\left(\nabla f(x)-\nabla f_i(x)\right)\left(\nabla f(x)-\nabla f_i(x)\right)^T.
\]
Under suitable assumptions \cite{LCW}, the above discrete iteration may be seen as discretization of a stochastic differential equation of the form
\[dX_t=-\nabla f(X_t)dt+\left[\beta^{-1}\Sigma(x)\right]^{1/2}dW_t, \quad \beta^{-1}=\eta,
\]
where $W$ is a standard $d$-dimensional Wiener process. Suppose $\Sigma(x)\equiv I$ the identity matrix\footnote{It has been observed \cite{CS,RJC} that the noise is actually quite far from being isotropic. It is conceivable to extend the results of this paper to this more general situation involving general diffusion processes using the results in \cite{ACC}.} so that the (SGD) equation is
\begin{equation}\label{SGD}
dX_t=-\nabla f(X_t)dt+\beta^{-1/2}dW_t,
\end{equation}
Then, if $\exp [-\beta f]$ is integrable, the distribution of $X_t$ tends to the invariant measure of the Boltzmann-Gibbs type
\begin{equation}\label{BG}\bar{\rho}(x;\beta)=Z(\beta)^{-1}\exp[-\beta f(x)].
\end{equation}
It is also well-known that, as the inverse temperature parameter $\beta$ increases to infinity, the stationary measure tends to concentrate  on the absolute minima of $f$.

The loss function, besides being non convex, provides an extremely rugged {\em energy landscape}. Local minima that generalize well lie in wide valleys where most of the eigenvalues of the Hessian are close to zero\footnote{These regions are robust with respect to data perturbations, noise in the activations and perturbation of the parameters, cf. \cite{SBL}, \cite[Section 1]{CCSY}.}. It is therefore useful to {\em smooth} the loss function. One possible way to do this is via {\em local entropy} \cite{CCSY,COOSC} which was motivated by \cite{BILSZ} which studies energy landscapes for the discrete perceptrons. Using their notation, the latter is defined as the convolution of $\bar{\rho}$ with the heat kernel
\begin{subequations}\label{LocEnt}
\begin{eqnarray}\nonumber
f_\gamma(x)&=&u(x,\gamma)\\&=&-\frac{1}{\beta}\log\left(G_{\beta^{-1}\gamma}\star\exp(-\beta f(x)\right)\\
G_\gamma(x)&=&(2\pi\gamma)^{-\frac{d}{2}}\exp\left(-\frac{||x\|^2}{2\gamma}\right).
\end{eqnarray}
\end{subequations}
It follows immediately, \cite[Lemma 2]{COOSC}, that $u(x,\gamma)$ satisfies
\begin{eqnarray}\label{VHJa}
&&\frac{\partial u}{\partial t}=-\frac{1}{2}\|\nabla u\|^2+\frac{1}{2\beta}\Delta u, \quad 0\le t\le \gamma,\\&&u(x,0)=f(x).
\label{VHJb}\end{eqnarray}
It is also immediate that if we define
\[\rho(x,t):=Z(\beta)^{-1}\exp[-\beta u(x,t)],
\]
then $\rho$ satisfies 
\begin{eqnarray}\label{HEATa}
&&\frac{\partial \rho}{\partial t}=\frac{1}{2\beta}\Delta \rho, \quad 0\le t\le \gamma,\\&&\rho(x,0)=\bar{\rho}(x).
\label{HEATb}\end{eqnarray}
This is just the Fokker-Planck equation for the stochastic process
\begin{equation}\label{WienerProcess}dZ_t=\beta^{-1/2}dW_t, \quad Z_0\sim\bar{\rho}(x)dx.
\end{equation}
Hence, 
\begin{equation}\label{LE}u(x,t)=-\frac{1}{\beta}\log\rho(x,t)+ c(\beta).
\end{equation}
Obviously, $\bar{\rho}$, which is invariant for (\ref{SGD}), is not invariant for (\ref{WienerProcess}) and the energy landscape gets smoothed. In order to unveil the connection between local entropy and a certain large deviation problem, we shall need to consider reverse-time stochastic control problems in the spirit of \cite{P}.

\section{Backgound on finite-energy diffusions}\label{FED}

Consider a physical system consisting of a large number $N$ of particles. Their flow is described over the time interval $[0,1]$ by a Fokker-Planck equation. Suppose that at the final time $1$ the probability density is found to be approximatively equal to $\rho_1(x)$ which is, however, not compatible with the ``a priori" evolution.  If we are confident in our reference model, we see that something ``exotic" has occurred. In the spirit of Boltzmann \cite{BOL} and Schr\"odinger \cite{S1,S2}, we can then pose the following question: Of the many unlikely ways in which this may have occurred, which one is the most likely? This is a problem of {\em large deviations of the empirical distribution} which, thanks to Sanov's theorem \cite{SANOV}, is a equivalent to a maximum entropy problems for measures on paths \cite{F2}.
As observed in \cite{PTT}, solving this problem allows to reconstruct the past evolution of a system from the final marginal and the reference evolution (without the latter, the problem is typically ill-posed). This may be viewed as a generalization of the Bayesian paradigm, featuring a large number of applications in many fields of science. 

Schr\"odinger's original hot gas {\em Gedankenexperiment} featured $N$ independent Brownian particles and estimates of the marginals both at time $t=0$ and at time $t=1$. Besides its original statistical mechanics motivation, its importance lies with the inference method (see above) and as a computationally attractive regularization of the important Optimal Mass Transport problem, see \cite{W,Leo,PC,CGP21,CGPAR} for  survey papers.    As we shall see, the solution of the ``half-bridge" problem described above is much simpler than that for the full bridge problem.

In order to provide the modern formulation of this problem, we first recall a few basic facts about the kinematics of finite energy diffusion due essentially to Nelson \cite{N1} and F\"ollmer \cite{F1,F2}.

Let $\Omega:=C([0,1],\R^d)$ denote the family of $d$-dimensional continuous functions, $W_x$ denote Wiener measure on $\Omega$ starting at $x$ at $t=0$. If, instead of a Dirac measure concentrated at $x$, we give the volume measure as initial condition, we get the unbounded\footnote{Therefore, $W$ is not a probability measure. Its marginals at each point in time coincide with the volume measure.} measure
\begin{equation}\label{SWM}W:=\int W_x\,dx
\end{equation}
on path space, which is called {\em stationary Wiener measure} (or, sometimes, reversible Brownian motion). It is a useful tool to introduce the family of distributions $\mathcal D$ on $\Omega$ which are equivalent to the scaled measure $W_{\sigma^2}$, namely Wiener measure with variance $\sigma^2 I_d$.
By Girsanov's theorem {\cite{KS}}, under $Q\in\mathcal D$, the coordinate process $x(t,\omega)=\omega(t)$ admits the representations
\begin{eqnarray}\nonumber
dX(t)&=&\beta_+ dt+ \sigma dW_+(t), \quad \beta_+ \;{\rm is}\; {\cal F}_t^- - {\rm adapted},\\
dX(t)&=&\beta_- dt+ \sigma dW_-(t), \quad \beta_- \;{\rm is}\; {\cal F}_t^+ - {\rm adapted},\nonumber
\end{eqnarray}
where ${\cal F}_t^-$ and ${\cal F}_t^+$ are $\sigma$- algebras of events observable up to time $t$ and from time $t$ on, respectively, and $W_-$, $W_+$ are standard $d$-dimensional Wiener processes\footnote{In \cite{CLT}, the forward and backward stochastic differentials in $(1)$ and $(2)$ feature the same Wiener process  which is impossible excepting trivial cases. The statement a couple of lines below $(2)$ that ``these two stochastic processes are equivalent in the sense that their marginal densities are equal to each other throughout $t\in [0,T]$; in other words, $p_t^{(1)}\equiv p_t^{(2)}$" is correct but incomplete. Indeed, these two processes induce the same measure on path space and have, in particular, equal finite-dimensional distributions not just the one-time densities.} \cite{F1}. Moreover, the forward and the backward drifts $\beta_+, \beta_-$ satisfy the finite-energy condition
$$Q\left[\int_{0}^{1}\|\beta_+\|^2dt <\infty\right]=Q\left[\int_{0}^{1}\|\beta_-\|^2dt <\infty\right]=1.
$$
They can be obtained as Nelson's conditional derivatives \cite{F1}
\begin{eqnarray}\nonumber\beta_+(t)&=&\lim_{h\searrow 0}\frac{1}{h}\E\left[X(t+h)-X(t)|{\cal F}_t^-\right],\\\beta_-(t)&=&\lim_{h\searrow 0}\frac{1}{h}\E\left[X(t)-X(t-h)|{\cal F}_t^+\right],\nonumber
\end{eqnarray}
where the limits are in $L_d^2(\Omega,\mathcal F, Q)$\footnote{$L_d^2(\Omega,\mathcal F, Q)$ is the Hilbert space of (equivalence classes) of $d$-dimensional, real random vectors possessing finite second moment, i.e. $\tr\left[\E\left(XX^T\right)\right]<\infty$.}. It was also shown in \cite{F1} that the one-time
probability density $\rho(\cdot,t)$ of $X(t)$  exists for every $t\in[0,1]$ and the following relation holds a.s.
\begin{equation}\label{DUALITY} \E\{\beta_+(t)-\beta_-(t)|x(t)\} =
\sigma^2\nabla\log\rho(X(t),t). 
\end{equation} 
The finite-energy diffusion $\{X(t); 0\le t\le 1\}$ is called {\it Markovian} if there
exist two measurable functions $b_+(\cdot,\cdot)$ and $b_-(\cdot,\cdot)$ such
that  $\beta_+(t)=b_+(X(t),t)$ a.s. and
$\beta_-(t)=b_-(X(t),t)$ a.s., for all $t$ in $[0,1]$. The duality relation
(\ref{DUALITY})  is now implied by Nelson's relation \cite{N1} 
\begin{equation}\label{NELSONDUALITY}  b_+(x,t)-b_-(x,t) =
\sigma^2\nabla\log\rho(x,t). 
\end{equation} 
Observe that stationary Wiener measure is a Markovian measure with $b_+(x)=b_-(x)\equiv 0$, since the density of the invariant Lebesgue measure is $\rho(x,t)\equiv 1, \forall t$.

For $Q,P\in\mathcal D$, the {\em relative entropy} ({\em Divergence}, {\em Kullback-Leibler index}) $H(Q,P)$ of $Q$ with respect to $P$ is
$$H(Q,P)=\E_Q\left[\log\frac{dQ}{dP}\right].$$
It then follows from Girsanov's theorem  \cite{KS} that
\begin{subequations}
\begin{eqnarray}\nonumber
H(Q,P)&=&H(q_0,p_0)\\&+&\E_Q\left[\int_{0}^{1}\frac{1}{2\sigma^2}
\|\beta_+^Q-\beta_+^P\|^2dt\right]\label{RE1}\\\nonumber
&=&H(q_1,p_1)\\&+&\E_Q
\left[\int_{0}^{1}\frac{1}{2\sigma^2}
\|\beta_-^Q-\beta_-^P\|^2dt\right].\label{RE2}
\end{eqnarray}
\end{subequations}
Here $q_0$, $q_1$ ($p_0$, $p_1$) are the marginal {distributions} of $Q$ ($P$) at $0$ and $1$, respectively. Moreover, $\beta_+^Q$ and
$\beta_-^Q$ are the
forward and the backward drifts of $Q$, respectively, and similarly for $P$.

\section{Half bridge Problems}\label{HBP}
Let $W_{\sigma^2}$ be scaled stationary Wiener measure and let $\rho_0$ and $\rho_1$ be probability densities. Let $\mathcal D(\rho_0)$ and $\mathcal D(\rho_1)$ denote the set of distributions in $\mathcal D$ having the prescribed marginal at the initial and final time, respectively.
We consider the following two maximum entropy problems:
\vspace{0.5cm}
\begin{problem}\label{halfbridge1}
\begin{equation}{\rm Minimize}\quad H(Q,W_{\sigma^2}) \quad {\rm over} \quad Q\in{\cal D}(\rho_0),
\end{equation} 
\end{problem}
\begin{problem}\label{halfbridge2}
\begin{equation}{\rm Minimize}\quad H(Q,W_{\sigma^2}) \quad {\rm over} \quad Q\in{\cal D}(\rho_1),
\end{equation} 
\end{problem}
The large deviations motivation for studying these two problems is as follows: we have observed an initial marginal (resp. final marginal) for a large number of independent Brownian particles which does not agree with our prior model $W_{\sigma^2}$. What is the most likely evolution for these particles? Again, by Sanov's theorem \cite{SANOV}, the solution is obtained solving the maximum entropy Problems \ref{halfbridge1} (\ref{halfbridge2}). The solution of these problems follows immediately from representations (\ref{RE1})-(\ref{RE2}). Indeed, consider Problem \ref{halfbridge1}. Since $H(q_0,p_0)=H(\rho_0,p_0)$ is constant over ${\cal D}(\rho_0)$, it follows that it is optimal to make the Lagrangian cost equal to zero, namely the optimal $Q_1^\star$ has $\beta_+^{Q_1^*}=\beta_+^W=0$. We conclude that the optimal measure $Q_1^\star$ is induced on trajectories by the process
\begin{equation}\label{optimal1}
X_{1}(t)=X_{1}(0)+ \sigma W(t), \quad X_{1}(0)\sim \rho_0(x)dx,
\end{equation}
with $W$ standard $n$-dimensional Wiener process. Basically, to solve such a problem, it suffices to change the initial distribution of stationary Wiener measure. Similarly, to solve Problem \ref{halfbridge2}. Since $H(q_1,p_1)=H(\rho_1,p_1)$ is constant over ${\cal D}(\rho_1)$, it follows that  the optimal $Q_2^\star$ has $\beta_-^{Q_2^*}=\beta_-^W=0$. We conclude that the optimal measure $Q_2^\star$ is induced on trajectories by the process
\begin{equation}\label{optimal1}
X_{2}(t)=X_{2}(1)+ \sigma \bar{W}(t), \quad X_{2}(1)\sim \rho_1(x)dx,
\end{equation}
with $\bar{W}$ standard $d$-dimensional Wiener process with $\bar{W}(1)=0$. To solve such a problem, it namely suffices to change the final distribution of stationary Wiener measure. Let us now find the backward drift of $Q^\star_1$ and the forward drift of $Q^\star_2$. Let $q^\star_1(x,t)$ ($q^\star_2(x,t)$) be the probability density of $X_1(t)$ ($X_2(t)$). From (\ref{NELSONDUALITY}) (prior and therefore solutions are Markovian), we get
\begin{eqnarray}\label{FBDRIFT1}
\beta_-^{Q_1^*}&=&-\sigma^2\nabla \log q^\star_1(X_1(t),t),\\ \beta_+^{Q_2^*}&=&\sigma^2\nabla \log q^\star_2(X_2(t),t).
\label{FBDRIFT2}
\end{eqnarray}
We are interested in providing a stochastic control characterization of the $Q^\star_1$ in the case when the initial density in Problem \ref{halfbridge1} is $\rho_0(x)=\bar{\rho}(x)$ given by (\ref{BG}) and $\sigma^2=1/\beta$. Let $\bar{W}$ be as in (\ref{optimal1}) and $\mathcal V$ be the family of finite-energy control functions adapted to the future of $\bar{W}$. Then we consider 
\begin{problem}\label{RTSC1}
\begin{eqnarray}\nonumber&&{\rm Minimize}_{v\in\mathcal V}\quad \E\left\{\int_0^t \frac{\beta}{2}\|v\|^2dt-\log\bar{\rho}(X_0)\right\}\\&&{\rm subject \; to}\nonumber\\&&dX_s=vds+\beta^{-1/2}d\bar{W}_s, \quad X_t=x.\nonumber
\end{eqnarray}
\end{problem}
We are now ready to unveil the connection between the local entropy $u(x,t)$ defined in (\ref{LocEnt}) and stochastic control. 
\begin{theorem}\label{THM} Consider Problem \ref{RTSC1}. The value function 
\[V(x,t):=\inf_{v\in\mathcal V}\E\left\{\int_0^t \frac{\beta}{2}\|v\|^2dt-\log\bar{\rho}(X_0)+c(\beta)|X_t=x\right\}
\]
is related to the local entropy (\ref{LocEnt}) by 
\begin{equation}\label{relation}\frac{1}{\beta}V(x,t)=u(x,t).
\end{equation}
The optimal control is of the feedback ({\em score function}) type given by
\begin{equation}\label{OPTCONTR}v^\star(x,t)=\frac{1}{\beta}\nabla V(x,t)=-\frac{1}{\beta}\nabla\log\rho(x,t).
\end{equation}
The measure induced on the trajectories is just $(Q_1^\star)_{tx}=Q_1^\star\left[\,\cdot\mid X_t=x\right]$.
\end{theorem}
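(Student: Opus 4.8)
The plan is to argue by a verification theorem adapted to reverse time: take $V(x,t)=\beta\,u(x,t)$, with $u$ the local entropy of (\ref{LocEnt}), as the candidate value function, show it solves the Hamilton-Jacobi-Bellman equation attached to Problem \ref{RTSC1} with the correct terminal data, and then confirm optimality by a completion-of-squares estimate. The additive constant $c(\beta)$ in the cost is chosen, as in (\ref{LE}), precisely to normalize the terminal data, and being a constant it plays no role in the optimal control.

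First I would write down the HJB equation. Since the controlled dynamics $dX_s=v\,ds+\beta^{-1/2}d\bar W_s$ run backward from $X_t=x$ and $v\in\mathcal V$ is adapted to the future, the correct infinitesimal calculus is Nelson's backward It\^o rule, in which the second-order term carries a \emph{minus} sign:
\[
dV(X_s,s)=\Bigl(\partial_sV+v\cdot\nabla V-\tfrac{1}{2\beta}\Delta V\Bigr)ds+\beta^{-1/2}\nabla V\cdot d\bar W_s .
\]
Combining this with the dynamic programming principle over a short backward interval and minimizing the Hamiltonian $\frac{\beta}{2}\|v\|^2-v\cdot\nabla V$ (a quadratic, minimized at $v^\star=\frac{1}{\beta}\nabla V$) yields $\partial_tV=-\frac{1}{2\beta}\|\nabla V\|^2+\frac{1}{2\beta}\Delta V$ with terminal condition $V(x,0)=-\log\bar\rho(x)+c(\beta)$.

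Next I would check the candidate. Dividing the HJB equation by $\beta$ and writing $\nabla V=\beta\nabla u$, $\Delta V=\beta\Delta u$ returns exactly the viscous Hamilton-Jacobi equation (\ref{VHJa}) that $u$ is already known to satisfy, while the terminal condition reduces, via $\bar\rho=Z(\beta)^{-1}\exp[-\beta f]$ and the choice of $c(\beta)$, to $\frac{1}{\beta}V(x,0)=f(x)=u(x,0)$. To turn this into a genuine proof I would run the verification directly: for an arbitrary admissible $v$ generating $X$ with $X_t=x$, apply the backward It\^o rule to $V(X_s,s)$, insert the HJB identity $\partial_sV-\frac{1}{2\beta}\Delta V=-\frac{1}{2\beta}\|\nabla V\|^2$, take expectations so the stochastic integral drops out, and complete the square to get
\[
\E\Bigl\{\int_0^t\tfrac{\beta}{2}\|v\|^2ds-\log\bar\rho(X_0)+c(\beta)\,\big|\,X_t=x\Bigr\}-V(x,t)=\frac{\beta}{2}\,\E\int_0^t\bigl\|v-\tfrac{1}{\beta}\nabla V\bigr\|^2ds\ \ge\ 0,
\]
with equality iff $v=v^\star=\frac{1}{\beta}\nabla V$. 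This simultaneously identifies $V=\beta u$ as the value function, giving (\ref{relation}), and shows the optimal feedback is $v^\star=\frac{1}{\beta}\nabla V=\nabla u=-\frac{1}{\beta}\nabla\log\rho$ by (\ref{LE}), which is (\ref{OPTCONTR}).

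Finally, for the induced measure I would note that the optimal backward drift $-\frac{1}{\beta}\nabla\log\rho$ coincides with the backward drift of $Q_1^\star$ recorded in (\ref{FBDRIFT1}), because the density $q_1^\star$ of (\ref{WienerProcess}) solves the heat equation (\ref{HEATa})--(\ref{HEATb}) and hence equals $\rho$; the optimally controlled diffusion started at $X_t=x$ therefore has the same backward drift and diffusion coefficient as $Q_1^\star$ conditioned on $X_t=x$, so the two path measures agree and the induced law is $(Q_1^\star)_{tx}$. The step I expect to be the most delicate is the rigorous verification itself: ensuring that $u$ (hence $V$) has the smoothness and growth needed for the backward It\^o expansion, that the candidate control $v^\star=-\frac{1}{\beta}\nabla\log\rho$ is admissible in the finite-energy class $\mathcal V$ so that the stochastic integral is a true, mean-zero martingale and the controlled SDE is well posed, and that all expectations above are finite. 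These are exactly the points where the finite-energy framework of Section \ref{FED} and the regularizing effect of the heat semigroup in (\ref{HEATa}) must be invoked.
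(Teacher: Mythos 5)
Your proposal is correct and follows essentially the same route as the paper: the backward dynamic programming equation (\ref{DPE}) for the reverse-time problem, divided by $\beta$, is matched against the viscous Hamilton--Jacobi equation (\ref{VHJa}) satisfied by the local entropy, and the optimal feedback and induced measure are identified through the backward drift (\ref{FBDRIFT1}) of $Q_1^\star$. The only difference is that you run the argument as a verification theorem with an explicit completion of squares---which the paper itself mentions only as an alternative proof, citing \cite[p.679-680]{CGP4}---and this makes your version somewhat more self-contained, since it avoids assuming a priori that the value function itself is regular; you also state the correct terminal condition $V(x,0)=-\log\bar\rho(x)+c(\beta)$, whereas the paper's proof carries a spurious factor $1/\beta$ there.
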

\begin{proof}
Indeed, if $V$ is sufficiently regular, it satisfies the Backward Dynamic programming equation \cite{FR}
\begin{equation}\label{DPE}\frac{\partial V}{\partial t}-\frac{1}{2\beta}\Delta V=\min_{v\in\R^n}\left[-v\cdot\nabla V +\frac{\beta}{2}\|v\|^2\right]=-\frac{1}{2\beta}\|\nabla V\|^2,
\end{equation}
with boundary condition $V(x,0)=-\frac{1}{\beta}\log\bar{\rho}(x) + c(\beta)$. Multiplying (\ref{DPE}) by $1/\beta$, we get that $(1/\beta)V(x,t)$ satisfies the same equation as the local entropy $u(x,t)$. Considering the initial condition, we see that $(1/\beta)V(x,t)$ and $u(x,t)$ differ by the (irrelevant) additive constant $c(\beta)$. Moreover, the optimal backward drift is $v^*(x,t)=-\frac{1}{\beta}\nabla\log\rho(x,t)$ where $\rho$ satisfies (\ref{HEATa})-(\ref{HEATb}). This is the same backward drift (\ref{FBDRIFT1}) of $Q_1^*$ ($\sigma^2=1/\beta$) if we im such as \cite{}pose as initial marginal $\bar{\rho}$. Thus, the measure corresponding to the optimal control is just $(Q_1^*)_{tx}$.
\end{proof}
This result, as far as the stochastic control representation goes, is connected to \cite[Theorem 2.1]{P} (case $f=0, b_+=0$ and $a=1/\beta$). The connection to Schr\"odinger Bridges can also be made starting from the observation that $\rho(x,t)$ satisfying (\ref{HEATa})-(\ref{HEATb}) is {\em space-time harmonic} along the lines outlined at the end of Section 3 in \cite{PW}.
Finally, an alternative proof can be based on a ``completion of  squares argument"  \cite[p.679-680]{CGP4}
\section{Discussion}\label{discussion}

Theorem \ref{THM} provides the correct variational representation for the local entropy $u(t,x)$ through (\ref{relation}). This variational representation may be used as a basis of numerical schemes seeking the minima of the local entropy.

 In  \cite{COOSC}, trying to connect the local entropy to a standard stochastic control problem with an initial condition for the controlled evolution, see (CSGD) there, and a final cost in the criterion, see (24) there, led to some confusion in an otherwise quite interesting paper. It is there stated in the abstract that ``A stochastic control interpretation is used to prove that a modified algorithm converges faster than the SGD in expectation." Theorem 12 in Section 5 is claimed to provide theoretical support to the fact that ``minimizing local entropy leads to an improvement in the original loss $f(x)$ as compared to stochastic gradient descent".  It seems to the present author that  Theorem 12 amounts to the observation that the zero control does no better than the optimal control, a statement which, evidently, requires no mathematical proof.

In closing, we mention that it would be interesting to connect these reverse-time stochastic control problems to the reverse process occurring in various generative models of machine learning such as \cite{HJA,ZC,CLT,SSKKE}.


\begin{thebibliography}{99}
\bibitem{ACTTV} J. P. Agnelli, M. Cadeiras, E. G. Tabak, C. V. Turner and E. Vanden-Eijnden, Clustering and classification through normalizing flows in feature space, SIAM J. Multiscale Model. Simul., {\bf 8}, (5), 1784-1802, 2010.
\bibitem{BILSZ} C. Baldassi, A. Ingrosso, C. Lucibello, L. Saglietti, and R. Zecchina, Subdominant Dense Clusters Allow for Simple Learning and High Computational Performance in Neural Networks with Discrete Synapses, {\em Phys. Rev. Lett.}, {\bf 115}, 128101, 2015.

\bibitem{BOL}
L~Boltzmann.
 \newblock \"{U}ber die Beziehung zwischen dem zweiten Hauptsatze der mechanischen W\"{a}rmetheorie und der Wahrscheinlichkeitsrechnung resp. den S\"{a}tzen \"{u}ber das W\"{a}rmegleichgewicht. Wiener Berichte 76, 373-435, 1877. 
\newblock {\em Reprinted in F. Hasenoehrl (ed.): Wissenschaftliche Abhandlungen}. Leipzig: J. A. Barth 1909, Vol. 2, 164-223.

\bibitem{CCSY} P. Chaudhari, A. Choromanska, S. Soatto, Y. LeCun, C. Baldassi, C. Borgs, J. Chayes, L. Sagun, R. Zecchina, Entropy-SGD: Biasing Gradient Descent Into Wide Valleys, ICLR 2017.
\bibitem{CS} P. Chaudhari and S. Soatto, S., 2018, February. Stochastic gradient descent performs variational inference, converges to limit cycles for deep networks. In 2018 Information Theory and Applications Workshop (ITA) (pp. 1-10). IEEE.
\bibitem{COOSC} P. Chaudhari, A. Oberman, S. Osher, S. Soatto and G. Carlier, Deep relaxation: partial differential equations for optimizing deep neural networks, {\em Research in the Mathematical Sciences}, {\bf 5} (3), 1-30, 2018.
\bibitem{CLT} T. Chen, G.-H. Liu and E.A. Theodorou, Likelihood training of Schr\"{o}dinger bridge using forward-backward SDE's theory, ArXiv e-prints, arXiv: 2110.11291v2.
\bibitem{ACC} Y. Chen, T. Georgiou and M. Pavon, Optimal steering of inertial particles
diffusing anisotropically with losses, {\em Proc. Amer. Control Conf.}, 2015, 1252-1257. 
\bibitem{CGP4} Y.\ Chen, T.T.\ Georgiou and M.\ Pavon, On the relation between optimal transport and Schr\"{o}dinger bridges: A stochastic control viewpoint, {\em J. Optim. Theory and Applic.}, {\bf 169} (2), 671-691, 2016.
\bibitem{CGP21}Y.\ Chen, T.T.\ Georgiou and M.\ Pavon, Stochastic control liaisons: Richard Sinkhorn meets Gaspard Monge on  a Schr\"{o}dinger bridge, {\em SIAM Review}, {\bf 63}, (2), 249-313, 2021. 
\bibitem{CGPAR}Y.\ Chen, T.T.\ Georgiou and M.\ Pavon, Optimal Transport in Systems and Control,  {\em Annual Review of Control, Robotics, and Autonomous Systems}, {\bf 4}, (1), 89-113, 2021.
\bibitem {DPP}P. Dai Pra and M. Pavon, Variational path-integral representations for the density
of a diffusion process, {\it Stochastics}  {\bf 26} (1989), 205-226. 
\bibitem{DTHD} V. De Bortoli, J. Thornton, J. Heng, and A.Doucet. Diffusion Schr\"{o}dinger
bridge with applications to score-based generative modeling, ArXiv e-prints, arXiv: 2106.01357.
\bibitem{DSB} L. Dinh, J. Sohl-Dickstein, and S. Bengio, Density estimation using real nvp, ArXiv e-prints, arXiv: 1605.08803
\bibitem{FR} Fleming, W.H., Rishel, R.W.: Deterministic and Stochastic Optimal Control, Springer-Verlag, Berlin (1975)
\bibitem{Fle1} W. H. Fleming, Exit probabilities and optimal stochastic control, {\em Appl. Math. Optim.}, {\bf 4}, (1978), 329-346.
\bibitem{Fle2} W. H. Fleming, Logarithmic transformation and stochastic control, in {\em Advances in Filtering and Optimal Stochastic Control}, W. H. Fleming and L. G. Gorostiza Eds., Springer-Verlag Lecture Notes in Control and Information Sciences, {\bf 42}, (1982), 131-141.
\bibitem{Fle83} W. H. Fleming, Stochastic calculus of variations and mechanics, {\em J. Optim. Th. Appl.}, {\bf 41} (1), 55-74, 1983.
\bibitem{Fle3}W. Fleming and S. Sheu, Stochastic variational formula for fundamental solutions of parabolic PDE, {\em Appl. Math. and Optimization}, {\bf 13} (1985), 193-204.
\bibitem{F1} H. F\"ollmer, Time reversal on Wiener space, in {\em Stochastic Processes - Mathematics and Physics} , Lecture Notes in Mathematics (Springer-Verlag, New York,1986), Vol. 1158, 119-129.
\bibitem{F2}
Hans F{\"o}llmer.
\newblock Random fields and diffusion processes.
\newblock In {\em {\'E}cole d'{\'E}t{\'e} de Probabilit{\'e}s de Saint-Flour
  XV--XVII, 1985--87}, pages 101--203. Springer, 1988.
  \bibitem{HCSDA} J. Ho, X. Chen, A. Srinivas, Y. Duan, and P. Abbeel, Flow++: Improving flow-based generative models
with variational dequantization and architecture design, in {\em International Conference on Machine Learning},
2722-2730, PMLR, 2019.
\bibitem{HJA} J. Ho, A. Jain and P. Abbeel, Denoising diffusion probabilistic models,  {\em Advances in Neural Information Processing Systems}, {\bf 33} (2020), 6840-6851.
  \bibitem{KS} I. Karatzas and S. E. Shreve, {\em Brownian Motion and
Stochastic Calculus}, Springer-Verlag, New York, 1988.
\bibitem{Leo}
Christian L{\'e}onard.
\newblock A survey of the Schr{\"o}dinger problem and some of its
  connections with optimal transport.
\newblock {\em Discrete Contin. Dyn. Syst. A}, 2014, 34(4): 1533--1574, 2014.

\bibitem{LCW}Qianxiao Li, Cheng Tai, Weinan E,  Stochastic Modified Equations and Adaptive Stochastic Gradient Algorithms, Proc. of the {\em 34th Int. Conf. on Machine Learning}, Sydnsy, PMLR 70:2101-2110, 2017.
\bibitem{N1}
Edward Nelson.
\newblock {\em Dynamical theories of Brownian motion}.
\newblock Princeton university press, 1967.

\bibitem{N3} E. Nelson, Stochastic mechanics and random fields, in {\it \`Ecole
d'\`Et\`e de Probabilit\`es de Saint-Flour XV-XVII}, Lecture Notes in
Mathematics, edited by P. L. Hennequin (Springer-Verlag, New York, 1988),
Vol.1362, pp. 428-450.

\bibitem {P}M. Pavon, Stochastic control and nonequilibrium thermodynamical systems, {\it Appl. 
Math. and Optimiz.} {\bf 19} (1989), 187-202.
\bibitem{PW} M.Pavon and A.Wakolbinger, On free energy, stochastic control, and Schroedinger processes, in {\em Modeling, Estimation and Control of Systems with Uncertainty}, G.B. Di Masi, A.Gombani, A.Kurzhanski Eds., Birk\"{a}user, Boston, 1991, 334-348.
\bibitem{PTT} M. Pavon, E. G. Tabak and G. Trigila, The data-driven Schr\"odinger bridge, ArXiv e-prints, arXiv: 1806.01364, {\em Comm. Pure Appl. Math.}, {\bf 74}, (7),1545-1573, 2021. 
\bibitem{PC} G. Peyr\'e and M. Cuturi, Computational Optimal Transport, 
{\em Foundations
and Trends in Machine Learning}, {\bf 11}, no. 5-6,1-257, 2019.
\bibitem{RM} D. Rezendeand S. Mohamed, Variational inference with normalizing flows, in {\em International Conference
on Machine Learning}, 1530-1538. PMLR, 2015.
\bibitem{RJC} Rubing, Y., Jialin, M. and P. Chaudhari, Does the Data Induce Capacity Control in Deep Learning?  ArXiv e-prints, arXiv: :2110.14163.

\bibitem{SBL}  L. Sagun, L. Bottou, Y. LeCun, Eigenvalues of the Hessian in Deep Learning: Singularity and Beyond, ArXiv e-prints, arXiv: 1611.07476.

\bibitem{SANOV}
Ivan~N Sanov.
\newblock On the probability of large deviations of random magnitudes (in Russian)
\newblock {\em Mat. Sb. N. S.}, {\bf 42} (84) (1957), 11-44, Select. Transl. Math. Statist. Probab., 1, 213--244, 1961. 

\bibitem{S1}
E. Schr{\"o}dinger.
\newblock {\"U}ber die Umkehrung der Naturgesetze.
\newblock {\em Sitzungsberichte
der Preuss Akad. Wissen. Berlin, Phys. Math. Klasse}, 144-153, 1931.
\bibitem{S2} E. Schr\"{o}dinger, Sur la th\'{e}orie relativiste de
l'\'{e}lectron et l'interpretation de
la m\'{e}canique quantique, {\em Ann. Inst. H. Poincar\'{e}} {\bf 2},
269 (1932).
\bibitem{SSKKE} Y. Song, J. Sohl-Dickstein, D. P. Kingma, A. Kumar, S. Ermon, and B.
Poole, Score-based generative modeling through stochastic differential equations, ArXiv e-prints, arXiv: 2011.13456.

\bibitem{VTLL} F. Vargas, P. Thodoroff, N. D Lawrence, and A. Lamacraft, Solving Schr\"{o}dinger
bridges via maximum likelihood, ArXiv e-prints, arXiv: 2106.02081.

\bibitem{Vil}
C{\'e}dric Villani.
\newblock {\em Topics in optimal transportation}.
\newblock Vol. 58. American Mathematical Soc., 2003.


\bibitem{W}
A~Wakolbinger.
\newblock Schr{\"o}dinger bridges from 1931 to 1991.
\newblock In {\em Proc. of the 4th Latin American Congress in Probability and
  Mathematical Statistics, Mexico City 1990, Contribuciones en probabilidad y estadistica matematica}, E. Caba$\tilde{n}$a et al. (eds), 3 (1992), pages 61--79.
  
  \bibitem{WJXWY} G. Wang, Y. Jiao, Q. Xu, Y. Wang, and C. Yang, Deep generative learning via
Schr\"{o}dinger bridge, ArXiv e-prints, arXiv: 2106.10410.
  
 \bibitem{WKN} H. Wu, J. K\"{o}hler, and F. Noe, Stochastic normalizing flows, in Larochelle, H., Ranzato, M., Hadsell,
R., Balcan, M. F., and Lin, H. (eds.), {\em Advances in Neural Information Processing Systems}, vol. 33, 
5933-5944. Curran Associates, Inc., 2020.
  
 \bibitem{ZC} Q. Zhang and Y. Chen, Diffusion Normalizing Flow, ArXiv e-prints, arXiv: 2110.07579v1, 35th Conference on Neural Information Processing Systems (NeurIPS 2021), Sydney, Australia.

\end{thebibliography}
\end{document}